\newcommand{\N}{\mathbb{N}}
\newcommand{\Z}{\mathbb{Z}}
\newcommand{\R}{\mathbb{R}}
\newcommand{\x}{{\textnormal{\emph{\textbf x}}}}
\newcommand{\y}{{\textnormal{\emph{\textbf y}}}}
\newcommand{\z}{{\textnormal{\emph{\textbf z}}}}
\newcommand{\T}{{x}}
\newcommand{\cX}{{\mathcal X}}
\newcommand{\cQ}{{\mathcal Q}}
\newcommand{\Q}{{Q}}
\renewcommand{\T}{T}
\newcommand{\hU}{{\hat U}}
\newcommand{\hT}{{\hat T}}
\newcommand{\hY}{{\hat Y}}
\renewcommand{\S}{{S}}
\newcommand{\unu}{{\underline\nu}}
\theoremstyle{plain}
\newtheorem{theorem}{Theorem}
\newtheorem{lemma}[theorem]{Lemma}
\newtheorem{proposition}[theorem]{Proposition}
\newtheorem{corollary}[theorem]{Corollary}
\theoremstyle{definition}
\theoremstyle{remark}
\newtheorem*{claim*}{Claim}
\renewcommand{\le}{\leqslant}
\renewcommand{\ge}{\geqslant}
\renewcommand{\prec}{\preccurlyeq}
\numberwithin{equation}{section}
\theoremstyle{plain}
\begin{document}

\begin{frontmatter}
\title{Yaglom limit via Holley inequality}
\runtitle{Yaglom limit via Holley inequality}
\begin{aug}
\author{\fnms{Pablo A.} \snm{Ferrari}\ead[label=e1,url]{http://mate.dm.uba.ar/\string~pferrari/}\ead[label=e1]{pferrari@dm.uba.ar}}
\and
\author{\fnms{Leonardo T.} \snm{Rolla}\ead[label=e2,url]{http://mate.dm.uba.ar/\string~leorolla/}\ead[label=e2]{leorolla@dm.uba.ar}}
\runauthor{P.A. Ferrari and L.T. Rolla}
\affiliation{Universidad de Buenos Aires}
\address{Departamento de Matemática\\
Facultad de Ciencias Exactas y Naturales\\
Universidad de Buenos Aires\\
Pabellón 1, Ciudad Universitaria\\
1428 Ciudad de Buenos Aires, Argentina\\
\printead{e1,e2}}
\end{aug}
\begin{abstract}
Let $\S$ be a countable set provided with a partial order and a minimal element.
Consider a Markov chain on $S\cup\{0\}$ absorbed at $0$ with a quasi-stationary distribution.
We use Holley inequality to obtain sufficient conditions under which the following hold.
The trajectory of the chain starting from the minimal state is stochastically dominated by the trajectory of the chain starting from any probability on~$\S$, when both are conditioned to nonabsorption until a certain time.
Moreover, the Yaglom limit corresponding to this deterministic initial condition is the unique minimal quasi-stationary distribution in the sense of stochastic order.
As an application, we provide new proofs to classical results in the field.
\end{abstract}
\begin{keyword}
\kwd{quasi stationary distributions}
\kwd{Yaglom limit}
\kwd{quasi limiting distributions}
\kwd{Holley inequality}
\end{keyword}
\end{frontmatter}

This preprint has the same numbering of sections, equations and theorems as the the
published article
``\emph{Braz. J. Probab. Stat. 29 (2015), 413--426.}''

\section{Introduction}
Let $S$ be a countable set, $0$ be an element outside $S$ and consider a Markov chain $X_n\in S\cup\{0\}$ with transition matrix $\Q(x,y)$, $x,
y\in S\cup\{0\}$.
We assume that the matrix $Q$ yields a chain which is \emph{absorbed at $0$},
meaning that $\Q(0,0)=1$.
We assume also $Q(x,0)<1$ for all $x\in \S$.

For a probability measure $\nu$ on $\S$, we define $\nu T_n$ as the conditional distribution at time~$n$ of the chain started with law $\nu$ given that it is not absorbed until time~$n$.
More precisely,
\begin{equation}
\label{b10}
\nu T_n(y) :=\frac{\nu \Q^n(y)}{1-\nu Q^n(0)},\qquad y \in \S.
\end{equation}
A probability measure $\nu$ is a \emph{quasi-stationary distribution} (or simply \emph{qsd}) if $\nu=\nu T_1$ (and thus $\nu T_n=\nu$
for all $n\ge 1$).
Rewriting~(\ref{b10}), a measure $\nu$ is a qsd if and only if
\begin{equation}
\label{b11}
\nu(y) =\sum_{x \in S} \nu(x) \big[\Q(x,y)+\Q(x,0)\nu(y)\big],\qquad y\in\S.
\end{equation}

The \emph{Yaglom limit} of $\nu$ is $\lim_n\nu T_n$, if the limit exists and is a probability measure.
In this case the limit is called a \emph{quasi-limiting distribution}.

Assume that $\S$ has a partial order denoted $\le$ and a minimal state called $1$.
Let $\nu\prec\nu'$ denote the stochastic order of measures on $\S$ induced by $\le$.

We say that a probability measure $\mu$ on $\S^n$ is \emph{irreducible} if the set
$\{(x_1,\dots,x_n)\in \S^n: \mu(x_1,\dots,x_n)>0\}$ is connected in the sense
that any element of $S^n$ with positive $\mu$-probability can be reached from
any other via successive coordinate changes without passing through elements
with zero $\mu$-probability. We say that a Markov chain on $\S\cup\{0\}$ with
transition matrix $\Q$ and initial distribution $\nu$ on $\S$
has \emph{irreducible trajectories in~$\S$} if for each $n\ge
1$ the measure $\mu$ on $\S^{n+1}$ defined by
\(
\mu(x_0,\dots,x_n) :=
 \nu(x_0)Q(x_0,x_1)\dots Q(x_{n-1},x_n)
\)
is irreducible.

Let $\delta_x$ be the probability distribution on $\S$ concentrated on the state $x\in\S$.
\begin{theorem}
\label{t1}
Let $\S$ be a partially ordered countable space with a minimal element called 1. Let $Q$ be the transition matrix of a Markov chain on $\S\cup\{0\}$ absorbed at~$0$.
Assume that the chain with initial distribution $\delta_1$ has irreducible trajectories in $\S$. If, for all $x,x',z,z'\in\S$ with $x\le x'$, $z\le z'$, whenever the denominators are positive,
\begin{equation}
\label{e2}
\displaystyle{\frac{\Q(x,\cdot)\Q(\cdot,z)}{\Q^2(x,z)}} \prec
\displaystyle{\frac{\Q(x',\cdot)\Q(\cdot,z')}{\Q^2(x',z')}},
\end{equation}
\begin{equation}
\label{e3}
\frac{\Q(x,\cdot)}{1-\Q(x,0)} \prec \frac{\Q(x',\cdot)}{1-\Q(x',0)},
\end{equation}
as probability measures on $\S$, then the following hold:\\
i. The sequence $(\delta_1 T_{n})_{n\ge 1}$ is monotone:
$\delta_1 T_n\prec\delta_1 T_{n+1}$, for all $n\ge 0$.\\
ii. For any probability $\nu$ on $\S$, $\delta_1 T_n\prec\nu T_n$.\\
iii. In particular, if $\nu$ is a qsd, then $\delta_1 T_n\prec\nu$, for all $n\ge 0$.\\
iv. If there is a qsd for $\Q$, then the Yaglom limit of $\delta_1$ converges.
The limit distribution $\unu := \lim_n\delta_1T_n$ is a qsd and satisfies $\unu \prec \nu$ for any other qsd $\nu$.
\end{theorem}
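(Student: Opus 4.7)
The strategy is to deduce (ii) by applying Holley's inequality at the level of path measures, and to obtain (i), (iii), and (iv) as corollaries. For each $n\ge 1$, let $\lambda_1^{(n)}$ and $\lambda_\nu^{(n)}$ denote the joint laws on $\S^{n+1}$ of the trajectory $(X_0,\dots,X_n)$ conditioned on non-absorption up to time $n$, when the chain is started from $\delta_1$ and from $\nu$ respectively. Their marginals on the last coordinate are precisely $\delta_1 T_n$ and $\nu T_n$, so it suffices to establish $\lambda_1^{(n)}\prec\lambda_\nu^{(n)}$ in the coordinatewise stochastic order on $\S^{n+1}$.

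To do so, I would invoke the form of Holley's inequality whose hypothesis is the stochastic monotonicity of single-site conditional distributions (Preston's formulation). By the Markov property, under either measure the conditional law of the $i$-th coordinate given the configuration $\eta$ at the other coordinates is $\tfrac{\Q(\eta_{i-1},\cdot)\Q(\cdot,\eta_{i+1})}{\Q^2(\eta_{i-1},\eta_{i+1})}$ at an interior site $1\le i\le n-1$, and $\tfrac{\Q(\eta_{n-1},\cdot)}{1-\Q(\eta_{n-1},0)}$ at the last site $i=n$; these are exactly the measures appearing in (\ref{e2}) and (\ref{e3}), so those hypotheses supply the required monotonicity in $\eta$ at these sites. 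At the initial site $i=0$, the $\lambda_1^{(n)}$-conditional is the point mass $\delta_1$, which is dominated stochastically by \emph{every} probability on $\S$ by minimality of $1$, so the required comparison against the $\lambda_\nu^{(n)}$-conditional (proportional to $\nu(\cdot)\Q(\cdot,\eta_1)$) is automatic. Putting these three observations together verifies the Holley hypothesis and yields $\lambda_1^{(n)}\prec\lambda_\nu^{(n)}$; projecting on the last coordinate gives~(ii).

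Part (i) is (ii) specialised to $\nu=\delta_1 T_1$, using $\delta_1 \prec \delta_1 T_1$ (trivial, since $1$ is minimal) and the semigroup identity $(\delta_1 T_1)T_n=\delta_1 T_{n+1}$. Part (iii) is (ii) applied to any qsd~$\nu$, for which $\nu T_n=\nu$. For (iv), the stochastic-order monotonicity from~(i) combined with the uniform stochastic bound from~(iii) yields pointwise convergence of the survival functions on the countable set~$\S$ to the survival function of some probability $\unu$ with $\unu\prec\nu$ for every qsd~$\nu$; to check $\unu$ is itself a qsd, one passes to the limit in $\delta_1 T_n T_1=\delta_1 T_{n+1}$ via dominated convergence with a fixed qsd as summable envelope, obtaining $\unu T_1=\unu$.

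The delicate step is the invocation of Holley's inequality itself. Its classical statement is for strictly positive measures on a finite distributive lattice, whereas here $\S$ is only a poset, the ambient product is countable, and $\lambda_1^{(n)}$ is supported on the strict subset of admissible trajectories of $\S^{n+1}$. This is exactly where the aperiodicity and the hypothesis of irreducible $\delta_1$-trajectories are used: they guarantee that any two paths in the support can be joined by a sequence of single-coordinate moves through the support, which is the local reachability property underlying the coupling proof of Holley's inequality and which replaces the strict positivity requirement of the classical formulation.
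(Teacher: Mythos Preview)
Your argument is correct and for parts (ii), (iii), (iv) matches the paper's approach essentially line for line: apply Holley's inequality on path space, use minimality of $1$ to make the initial-site condition automatic, project to the last coordinate, then combine monotonicity with the qsd upper bound to get the limit.

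For part (i), however, you take a genuinely shorter route than the paper. The paper introduces a nonhomogeneous family $\mathcal Q$ on the time interval $[-1,n]$, forcing the step at time $-1$ to land at state~$1$, and then applies a nonhomogeneous version of Holley (their Corollary~\ref{nonhomogeneous}) to compare $\mu_{-1}^n(\delta_1,\mathcal Q)$ with $\mu_{-1}^n(\delta_1,\Q)$. Your observation that (i) is simply (ii) with $\nu=\delta_1 T_1$, together with the semigroup identity $(\delta_1 T_1)T_n=\delta_1 T_{n+1}$, bypasses this machinery entirely. Your version is cleaner; the paper's detour through Corollary~\ref{nonhomogeneous} appears unnecessary for Theorem~\ref{t1} (and indeed the same shortcut works in the periodic Theorem~\ref{t7}, since $\delta_1 T_d$ is supported on $\S_1$). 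The only advantage of the paper's approach is that it exhibits the nonhomogeneous Holley inequality as a tool in its own right.

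One small inaccuracy: in your justification of (iv), stochastic domination $\delta_1 T_n\prec\nu$ does \emph{not} provide a pointwise summable envelope $\delta_1 T_n(x)\le C\nu(x)$, so dominated convergence is not the right mechanism. What does work is that pointwise convergence of probabilities to a probability on a countable space implies convergence in total variation (Scheff\'e), and $T_1$ is continuous for that topology; the paper simply asserts ``$T_1$ is continuous'' without elaboration.
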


The proof of Theorem~\ref{t1} is an application of Holley inequality in the space of finite-length trajectories of the chain.
Roughly speaking, Holley inequality says that the local dominations~(\ref{e2}) and~(\ref{e3}) imply that the conditional law of a length-$n$ trajectory of the chain starting with $\delta_1$ given nonabsorption by time~$n$ is stochastically dominated by the conditional law of a length-$n$ trajectory of the chain starting from any other measure~$\nu$.

In Section~\ref{sec:yaglom}, we state and prove Holley inequality, and use it to prove Theorem~\ref{t1}.
We then give a sufficient condition for~$\unu$ to be the minimal qsd in the sense of absorption time rather than stochastic domination.

Convergence of the Yaglom limit has been studied for birth-and-death chains and one-dimensional random walks in both continuous and discrete time.
Theorem~\ref{t1} gives an alternative proof to many of these classical results.

References to previous works and details of our approach to the aperiodic cases are discussed in detail in Section~\ref{s4}.
Periodic chains are discussed in Section~\ref{sec:periodic}, after condition~(\ref{e2}) is relaxed so as to include this case.

There is a large literature on qsd's compiled and periodically updated by Pollett~\cite{pollett}.
We quote the recent book of Collet, Martínez and San Martín~\cite{MR2986807} and the work of Kesten~\cite{MR1341881} on Yaglom limits of discrete-time Markov chains.

\section{Yaglom limit via Holley inequality}
\label{sec:yaglom}

\subsection{Trajectory distribution}

For integers $n<m$ let $\cX_n^m :=\{(x_n,\dots,x_m)\,:\,x_k\in\S$ for $n\le k\le m\}$ be the set of possible trajectories of the chain with transition matrix $\Q$ in the time interval $[n,m]$ which are not absorbed by
0 in that time interval.

Let $\nu$ be a probability measure on $\S$ and define the measure
$\mu_n^m(\nu,\Q)$ on~$\cX_n^m$~by
\begin{equation}
\label{a10}
\mu_n^m(\nu,\Q)(\x_n^m):=
\frac{\nu(x_n)\Q(x_n,x_{n+1})\dots \Q(x_{m-1},x_m)}{1-\nu \Q^{n}(0)}
\end{equation}
where $\x_n^m=(x_n,\dots,x_m)$. The measure $\mu_n^m(\nu,\Q)$ is the conditional distribution of the chain $X_n^m=(X_n,\dots,X_m)$ with initial distribution $\nu$ at time~$n$ and transition probabilities $\Q$, given that the chain is not absorbed during the time interval $[n,m]$.

Due to the conditioning, the first marginal of~$\mu_n^m(\nu,\Q)$ is not $\nu$ in general, but its last marginal is $\nu T_{m-n}$.
Indeed, by~(\ref{b10}) and~(\ref{a10}),
\begin{equation}
\label{b12}
\nu T_{m-n}(y) = \sum_{\quad\quad\mathclap{(x_n,\dots,x_{m-1})}\quad}\mu_n^m(\nu,\Q)(x_n,\dots,x_{m-1},y).
\end{equation}

\subsection{Holley inequality}

Let $\Omega$ be a set endowed with a partial order $\le$.
Let $\mu, \mu'$ be probability measures on $\Omega$.
The stochastic domination $\mu\prec\mu'$ is equivalent to the existence of a measure $\tilde \mu$ on $\Omega\times\Omega$ with marginals $\mu$ and $\mu'$ such that $\tilde\mu((\omega,\omega'):\omega\le \omega')=1$, see for instance~\cite{MR1711599}.
In this case we say that $\tilde \mu$ is a \emph{monotone coupling} of $\mu$ and $\mu'$.

Let us endow~$\cX_n^m$ with the partial order given by the coordinate-wise order of trajectories: $\x_n^m\le \y_n^m$ if $x_k\le y_k$ for all $k\in[n,m]$.

A local domination condition for global domination of
measures is provided by Holley Inequality~\cite{holley}. Here is a version suited to our context.

\begin{proposition}[Holley inequality]
\label{holley}
Let $\S$ be a partially ordered countable space. Let $\nu\prec\nu'$ be
probabilities on $\S$ and let $\Q$, $\Q'$ be transition matrices on
$\S\cup\{0\}$ absorbed at $0$.
Denote the conditional laws of trajectories by $\mu=\mu_n^m(\nu,\Q)$ and $\mu'=\mu_n^m(\nu',\Q')$, respectively.
Assume that $\mu$ is an irreducible probability on the space of trajectories $\cX_n^m$.
If, for all $x,x',z,z'\in\S$ with $x\le x'$, $z\le z'$, whenever the denominators are positive,
\begin{align}
\tag{a}
\frac{\nu(\cdot)\Q(\cdot,z)}{\nu \Q(z)} & \prec \frac{\nu'(\cdot)\Q'(\cdot,z')}{\nu' \Q'(z')}
\\[2mm]
\tag{b}
\frac{\Q(x,\cdot)\Q(\cdot,z)}{\Q^2(x,z)}& \prec \frac{\Q'(x',\cdot)\Q'(\cdot,z')}{\Q'^2(x',z')}
\\[2mm]
\tag{c}
\displaystyle \frac{\Q(x,\cdot)}{1-Q(x,0)} & \prec \frac{\Q'(x',\cdot)}{1-Q'(x',0)}
\end{align}
as measures on $\S$, then $\mu\prec\mu'$.
\end{proposition}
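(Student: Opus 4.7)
The approach is a coupled Gibbs sampler, after observing that hypotheses (a), (b), (c) encode exactly the stochastic monotonicity of the single-site conditional distributions of $\mu = \mu_n^m(\nu,\Q)$ and $\mu' = \mu_n^m(\nu',\Q')$.

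A direct computation gives the conditional distribution of $\omega_k$ under $\mu$ given the remaining coordinates of $\omega \in \cX_n^m$: it equals $\nu(\cdot)\Q(\cdot,\omega_{n+1})/\nu\Q(\omega_{n+1})$ at $k=n$, equals $\Q(\omega_{k-1},\cdot)\Q(\cdot,\omega_{k+1})/\Q^2(\omega_{k-1},\omega_{k+1})$ for interior $n<k<m$, and equals $\Q(\omega_{m-1},\cdot)/(1-\Q(\omega_{m-1},0))$ at $k=m$, the last because the normalization sums only over $\S$, reflecting non-absorption at the final step. Hypotheses (a), (b), (c) say precisely that each such conditional under $(\nu,\Q)$ is stochastically dominated by its counterpart under $(\nu',\Q')$ whenever the conditioning values are coordinate-wise ordered.

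I would then run a coupled lazy single-site Gibbs sampler on $\cX_n^m \times \cX_n^m$: at each step, pick a time coordinate uniformly and resample both components via a monotone coupling of the two conditional distributions. By (a), (b), (c) such a monotone coupling exists whenever the current pair is coordinate-wise ordered, and the joint update preserves the ordering. The first marginal is the lazy Gibbs chain for $\mu$, which by the irreducibility hypothesis is irreducible and aperiodic on $\text{supp}(\mu)$ and so converges in distribution to $\mu$; the second marginal is a Gibbs-type chain with invariant $\mu'$.

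To initialize, I would first show that for every $\omega' \in \text{supp}(\mu')$ there is $\omega \in \text{supp}(\mu)$ with $\omega \le \omega'$, built coordinate by coordinate using that $\nu \prec \nu'$ (which gives existence of $x \in \text{supp}(\nu)$ below each $y \in \text{supp}(\nu')$) and iterating the analogous consequence of condition~(c) along the trajectory. Then initialize the coupling by drawing ${\omega'}^0 \sim \mu'$ and choosing $\omega^0 \in \text{supp}(\mu)$ with $\omega^0 \le {\omega'}^0$. The second marginal is then stationary at $\mu'$ and monotonicity propagates, so $E[f(\omega^t)] \le E[f({\omega'}^t)] = E_{\mu'}[f]$ for every bounded increasing $f$ and every $t$; letting $t \to \infty$ and using convergence of the $\mu$-Gibbs chain yields $E_\mu[f] \le E_{\mu'}[f]$, i.e.\ $\mu \prec \mu'$. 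The main delicate point is the pointwise-below-existence claim for starting pairs, which relies on Strassen's characterization of stochastic domination applied coordinate-by-coordinate along the trajectory together with the fact that condition~(c) is a domination of \emph{normalized} transition laws; the irreducibility of $\mu$ (and not of $\mu'$) is used precisely to make the first-marginal convergence go through, while stationarity of the second marginal bypasses any need for $\mu'$-irreducibility.
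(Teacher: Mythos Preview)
Your proposal is correct and follows essentially the same approach as the paper: a coupled Gibbs sampler where (a), (b), (c) are identified as the single-site conditionals, initialized by building a $\mu$-trajectory coordinate-wise below a $\mu'$-sample via $\nu\prec\nu'$ and condition~(c), with the second marginal started in stationarity at $\mu'$ and the first converging to $\mu$ by irreducibility. The only cosmetic difference is that the paper runs the Gibbs sampler in continuous time with rate-1 Poisson clocks rather than as a discrete-time lazy chain.
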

Holley inequality was proved in~\cite{georgii-haggstrom-maes} for finite state space~$S$.
We use the Markovian structure of $\mu$ and condition~(c) to get around this assumption.

\begin{proof}
Let $(\eta_t:t\ge0)$ be the Gibbs sampler for $\mu$, a Markov jump process on $\cX_n^m$ with the following evolution:
the value at each site $k\in[n,m]$ is updated at rate $1$ to a new value using the conditional distribution of $\mu$ given the configuration at the sites $[n,m]\setminus\{k\}$.
Different sites are never updated simultaneously since they use independent Poisson clocks.
This amounts to use the measures in the left hand side of (a), (b) and (c) to update sites~$n$, $[n+1,m-1]$ and~$m$, respectively.
The measure $\mu$ is reversible for
$\eta_t$.
Analogously, let $(\eta'_t:t\ge0)$ be the Gibbs sampler for $\mu'$ for which the updating is done with the measures in the right hand side of~(a),~(b) and~(c), respectively.
The measure~$\mu'$ is reversible for $\eta'_t$.

We will use the stochastic inequalities (a,b,c) to construct a monotone coupling $((\eta_t,\eta'_t):t\ge0)$ of both Gibbs sampler processes.
In this coupling, at rate 1 the value at each site in $[n,m]$ is simultaneously updated for both marginal trajectories using a monotone coupling of the measures in (a), (b) and (c) to update sites $n$, $[n+1,m-1]$ and $m$, respectively.
If the trajectories are ordered at time 0, then they will remain ordered at future times, that is, if $\eta_0\le\eta'_0$ then $\eta_t\le\eta'_t$ for all $t\ge 0$.
We thus need to find $\eta_0\le\eta'_0$.

We claim that, given any trajectory $\z_n^m$ with positive $\mu'$-probability, there exists a trajectory $\x_n^m \le \z_n^m$ with positive $\mu$-probability.
We prove this by constructing~$\x_n^m$ as follows.
Since $\nu'(z_n)>0$ and $\nu \prec \nu'$, it is possible to choose~$x_n \le z_n$ such that $\nu(x_n)>0$.
Suppose that $x_n \le z_n,\dots,x_k \le z_k$ have been chosen.
Since $Q'(z_k,z_{k+1})>0$, from condition~(c) it is possible to choose $x_{k+1}\le z_{k+1}$ such that $Q(x_k,x_{k+1})>0$.
This proves the claim.

The coupled process starts from $(\eta_0,\eta'_0)$, where $\eta'_0$ is distributed with the reversible measure $\mu'$ and, given $\eta'_0$, a trajectory $\eta_0\le\eta'_0$ with positive $\mu$-probability is chosen according to the previous claim.
Since $\eta_0\le\eta'_0$, by the coupling we have $\eta_t\le \eta'_t$ for all $t\ge 0$, and thus the law of $\eta_t$ is stochastically dominated by that of $\eta'_t$.
Irreducibility implies that~$\eta_t$ converges in distribution to its unique invariant measure~$\mu$.
On the other hand, the distribution of $\eta'_t$ is $\mu'$ for all $t$.
Letting $t\to\infty$, we get $\mu \prec \mu'$.
\end{proof}

The proof of Holley inequality works also for the nonhomogeneous case. Consider a family o transition matrices $\mathcal Q =(Q_k,k\in \Z)$ and let $(X_k)$ be a (nonhomogeneous) Markov chain satisfying $P(X_{k+1}=y|X_k=x)= Q_k(x,y)$, that is, the transition matrix $\Q_k$ is used at time $k$.
Then we have the following corollary of the proof of Proposition~\ref{holley}.

\pagebreak[3]
\begin{corollary}
\label{nonhomogeneous}
Proposition~\ref{holley} holds for nonhomogeneous families of transition matrices $\cQ$ and $\cQ'$ such that $Q_n$ and $Q_n'$ satisfy~\textnormal{(a)}, $Q_{m-1}$ and $Q_{m-1}'$ satisfy~\textnormal{(c)}, and that, for $k=n+1,\dots,m-1$,
\[
\tag{b'}
\frac{\Q_{k-1}(x,\cdot)\Q_{k}(\cdot,z)}{\Q_{k-1}Q_k(x,z)}
\prec
\frac{\Q_{k-1}'(x,\cdot)\Q_{k}'(\cdot,z)}{\Q_{k-1}'Q_k'(x,z)}
.
\]
\end{corollary}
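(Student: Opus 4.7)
The approach is to replay the Gibbs-sampler proof of Proposition~\ref{holley} coordinate by coordinate, tracking which time-indexed transition matrix governs each site. First I would write the trajectory weight under $\mu=\mu_n^m(\nu,\cQ)$ as
\[
\mu(x_n,\dots,x_m) \propto \nu(x_n)\,Q_n(x_n,x_{n+1})\,Q_{n+1}(x_{n+1},x_{n+2})\cdots Q_{m-1}(x_{m-1},x_m),
\]
and read off its single-site conditional distributions: at site $n$, proportional to $\nu(\cdot)Q_n(\cdot,x_{n+1})$; at an interior site $k$, proportional to $Q_{k-1}(x_{k-1},\cdot)Q_k(\cdot,x_{k+1})$; and at site $m$, proportional to $Q_{m-1}(x_{m-1},\cdot)$ on $\S$. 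These are exactly the measures on the left-hand sides of (a), (b'), and (c), with the analogous expressions for $\mu'$ on the right.

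I would then run coupled Gibbs samplers $(\eta_t)$ and $(\eta'_t)$ for $\mu$ and $\mu'$, each updating site $j$ at rate one from the corresponding conditional distribution. The hypotheses (a), (b'), (c) provide monotone couplings of the site-by-site updates, which assemble into a monotone coupling of the two samplers. If $\eta_0\le\eta'_0$, then $\eta_t\le\eta'_t$ for all $t\ge 0$. Since $\eta'_t\sim\mu'$ for every $t$, and by the assumed irreducibility of $\mu$ on $\cX_n^m$ the law of $\eta_t$ converges to $\mu$, taking $t\to\infty$ yields $\mu\prec\mu'$.

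The delicate step is producing $\eta_0\le\eta'_0$. Given $\z_n^m$ in the support of $\mu'$, I would mimic the forward construction of Proposition~\ref{holley}: use $\nu\prec\nu'$ to pick $x_n\le z_n$ with $\nu(x_n)>0$, and then inductively extract $x_{k+1}\le z_{k+1}$ with $Q_k(x_k,x_{k+1})>0$. I expect this forward-existence argument to be the main technical point, because (c) is stated only for $Q_{m-1}$ in the nonhomogeneous case; the analogous property for the earlier $Q_k$ has to be drawn from (a) and (b'), exploiting that all denominators along $\z_n^m\in\mathrm{supp}(\mu')$ are positive. Once $\eta_0\le\eta'_0$ is secured, the remainder is a direct transcription of the proof of Proposition~\ref{holley}.
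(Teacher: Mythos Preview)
Your approach is exactly the paper's: the corollary is presented as an immediate consequence of the proof of Proposition~\ref{holley}, with no argument beyond the sentence ``the proof of Holley inequality works also for the nonhomogeneous case.'' What you write is precisely the unpacking a reader would supply for that sentence.

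The point you single out as delicate---that the forward construction of $\x_n^m\le\z_n^m$ in the proof of Proposition~\ref{holley} invokes condition~(c) at \emph{every} step, while the corollary states~(c) only for $Q_{m-1}$---is genuine, and the paper does not address it. Your proposed extraction from (a) and (b') is not clearly workable either: those dominations hold only when the denominators on \emph{both} sides are positive, and at the inductive step you do not yet know that $Q_{k-1}Q_k(x_{k-1},z)>0$ for any $z\le z_{k+1}$. However, in the corollary's sole application (the proof of Theorem~\ref{t1}(i)) the difficulty does not arise: there $Q_k=Q$ for all $k\ge 0$ while $Q_{-1}$ sends every state deterministically to~$1$, so the forward step at $k=-1$ is trivial and for $k\ge0$ one simply uses~(\ref{e3}), which is condition~(c) for $Q$ itself. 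Thus your write-up is as complete as the paper's own treatment; the fully general forward construction remains the one loose end, exactly as you diagnosed.
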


\subsection{Monotonicity and Yaglom limit}

\begin{proof}
[Proof of Theorem~\ref{t1}]
We first prove (ii) using Holley inequality with $Q'=Q$ and $\nu=\delta_1$. The probability measure on the left-hand side of condition~(a) is $\delta_1$, and, since $1$ is minimal in $\S$, this condition is satisfied for any $\nu'$ on $\S$.
Conditions~(b) and~(c) are being assumed in~(\ref{e2}) and~(\ref{e3}).
Irreducibility of $\mu_{0}^n(\delta_1,\Q)$ has also been explicitly assumed. By Holley inequality, $\mu_{0}^n(\delta_1,\Q)\prec\mu_{0}^n(\nu',\Q)$, which by~(\ref{b12}) implies $\delta_1 T_n\prec\nu' T_n$, concluding the proof of~(ii).

If $\nu'$ is a qsd, then $\nu'=\nu'T_n$.
Together with (ii), this implies (iii).

To prove~(i), we introduce a nonhomogeneous chain $\mathcal Q$ forced to make
the first jump into state 1 while the rest of the jumps are governed by $\Q$.
Let $\mathcal Q=(Q_k,\,-1\le k\le n)$ be given by $\Q_{-1}(x,1)=1$
for all $x\in S$ and $\Q_k=Q$ for $k=0,\dots,n$.
By definition of $\Q_{-1}$, the projection of $\mu_{-1}^n(\delta_1,\mathcal Q)$ onto $\cX_{0}^n$ is $\mu_{0}^n(\delta_1,\Q)$.
Hence, the time-$n$ marginal of $\mu_{-1}^n(\delta_1,\mathcal Q)$ is $\delta_1T_{n}$, the same as the time-$n$ marginal of $\mu_{-n}^0(\delta_1,\Q)$.
Let $\mathcal Q'=(Q'_k,\,-1\le k\le n)$ be given by $Q'_{k}=Q$ for $k=-1,\dots,n$ (that is, the homogeneous chain).
The time-$n$ marginal of $\mu_{-1}^n(\delta_1,\mathcal Q')$ is $\delta_1T_{n+1}$, the same as the time-$n$ marginal of $\mu_{-1}^n(\delta_1,\Q)$.
Again, condition~(c) has been assumed in~(\ref{e3}).
Writing $\nu=\nu'=\delta_1$, condition~(a) holds trivially.
Condition (b') is trivial for $k=0$, and for $k = 1,2,\dots,n-1$ it is assumed in~(\ref{e2}).
Also, irreducibility of $\mu_{0}^n(\delta_1,\Q)$, and thus of $\mu_{-1}^n(\delta_1,\mathcal Q)$, has been explicitly assumed.
Using Corollary~\ref{nonhomogeneous} we get $\mu_{-1}^n(\delta_1,\mathcal Q) \prec \mu_{-1}^n(\delta_1,\mathcal Q')$, and thus $\delta_1 T_n\prec \delta_1 T_{n+1}$, proving (i).

To show (iv), let $\nu'$ be a qsd.
By (i), $\delta_1 T_n$ is an increasing sequence of measures and by (iii) all elements of the
sequence are dominated by $\nu'$.
Hence there is a limit $\unu:=\lim_n\delta_1 T_n\prec\nu'$.
To check that $\unu$ is a qsd, use that~$T_n$ is a semigroup and that $T_1$ is continuous to get
\[
\unu= \lim_n \delta_1T_{n+1} = \lim_n \delta_1T_{n}T_1 =\unu T_1.\qedhere
\]
\end{proof}

\subsection{Yaglom limit and minimal qsd}

For a measure $\nu$ on $\S$, denote
\begin{align}
 \label{p35}
 a(\nu):=1-\nu Q(0),
\end{align}
the mass staying at $S$ after one step for the chain starting with $\nu$. If $\nu$ is a qsd, then $\nu$ is a left eigenvector for $\Q_{|_\S}$ with eigenvalue $a(\nu)$:
\begin{align}
\nonumber
\nu\Q_{|_\S} = a(\nu)\nu.
\end{align}
Let $a_{*}:= \inf\{a(\nu):\nu$ is a qsd$\}$.
If there exists a qsd~$\nu$ with $a(\nu)=a_*$, then it is called \emph{minimal} and denoted $\nu_{\min}$.

The following lemma gives sufficient conditions in terms of $\Q$ so that the measure $\unu$ given by Theorem~\ref{t1} coincides with $\nu_{\min}$.

\begin{lemma}
\label{qmon1}
If $Q$ is such that
\(
\Q(x,0) \ge \Q(x',0),\hbox{ for all }x\le x' \in\S,
\)
and $\unu$ is a qsd such that $\unu \prec \nu$ for any other qsd $\nu$, then $\unu=\nu_{\min}$.
\end{lemma}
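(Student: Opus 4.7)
The plan is to translate the definition of $a(\nu)$ into an integral against the function $g(x) := Q(x,0)$ and then exploit the monotonicity hypothesis together with the stochastic domination $\unu \prec \nu$.

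First, I would write, for any probability measure $\nu$ on $\S$,
\[
a(\nu) = 1 - \nu Q(0) = 1 - \sum_{x\in\S}\nu(x)\,Q(x,0) = 1 - \int g \, d\nu,
\]
with $g(x) := Q(x,0)$. The hypothesis of the lemma says exactly that $g$ is a bounded \emph{decreasing} function on the partially ordered set $\S$.

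Next, I would recall the standard characterization of stochastic domination: $\mu \prec \mu'$ on $\S$ if and only if $\int f \, d\mu \le \int f \, d\mu'$ for every bounded increasing function $f$, or equivalently $\int g \, d\mu \ge \int g \, d\mu'$ for every bounded decreasing $g$. Applying this with $g(x) = Q(x,0)$ and the assumed domination $\unu \prec \nu$ for an arbitrary qsd $\nu$ yields
\[
\int g \, d\unu \ge \int g \, d\nu,
\]
and hence $a(\unu) = 1 - \int g \, d\unu \le 1 - \int g \, d\nu = a(\nu)$.

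Finally, since $\unu$ is itself a qsd, $a(\unu) \ge a_* = \inf\{a(\nu) : \nu \text{ is a qsd}\}$, while the previous inequality gives $a(\unu) \le a(\nu)$ for every qsd $\nu$, so $a(\unu) \le a_*$. Therefore $a(\unu) = a_*$, which by definition means $\unu = \nu_{\min}$. This argument is essentially one display long and encounters no real obstacle; the only thing to be careful about is recording that $g$ is bounded (it is $[0,1]$-valued) so that the dual characterization of $\prec$ applies without integrability issues, and noting that $\unu$ being a qsd is part of the hypothesis (so $a_*$ is attained).
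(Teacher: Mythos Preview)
Your proof is correct and is essentially identical to the paper's: both define the nonincreasing function $g(x)=Q(x,0)$, use $\unu\prec\nu$ to get $\unu g\ge\nu g$, conclude $a(\unu)\le a(\nu)$ for every qsd~$\nu$, and hence $a(\unu)=a_*$.
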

\begin{proof}
The function $f:\S\to \R^+$ given by $f(y)=Q(y,0)$ is nonincreasing, whence $\unu f \ge \nu f$ for any qsd $\nu$.
Thus, $a(\unu) \le a(\nu)$, and taking the infimum over~$\nu$ we get $a(\unu)=a_*$, which proves the lemma.
\end{proof}

\section{The birth-and-death chain}
\label{s4}

In this section we consider $\S=\N$ with the usual order and birth-and-death processes. The transition matrix $\Q$ is defined by:
\begin{align}
&p_x,\;r_x,\; q_x >0,\; q_x+r_x+p_x=1, \hbox{ for all } x\ge 1; \nonumber\\[2mm]
&\Q(x,x-1) = q_x,\; \Q(x,x) = r_x,\; \Q(x,x+1) = p_x, \label{bd}
\\[2mm]
&\Q(x,y)=0 \hbox{ if }|x-y|>1\hbox{ and }\Q(0,0)=1.\nonumber
\end{align}
In this case there exist a qsd if the absorption time of the chain
starting from a fixed state has an exponential moment; see for
instance van Doorn and Schrijner~\cite[Corollary~4.1]{MR1359177},
Ferrari, Martínez and Picco~\cite[Theorem~6.1]{MR1188953} and
Ferrari, Kesten, Martínez and Picco~\cite{MR1334159}.
Under these conditions, Cavender~\cite{cavender} shows that there is a critical value $\gamma>0$ such that there is a one-parameter family of qsd's $\{\nu:\nu(1)\in(0,\gamma]\}$ indexed by $\nu(1)$.
Cavender fixes $\nu(1)\le \gamma$ and computes explicitly the other values using the equation~(\ref{b11}) and the nearest-neighbor structure (this procedure does not yield a probability if $\nu(1)>\gamma$).
Cavender also shows that any pair of qsd's $\nu$, $\nu'$  satisfy a monotone likelihood ratio:
$\nu'(1)>\nu(1)$ implies $\frac{\nu(1)}{\nu'(1)}< \frac{\nu(2)}{\nu'(2)}\le \frac{\nu(3)}{\nu'(3)}\le \dots$, which in
turn implies the domination $\nu'\prec\nu$.

Van Doorn and Schrijner~\cite{vandoorn-schrijner-ratio} use the Karlin and McGregor polynomial representation of the chain to give a sufficient condition for the Yaglom limit to converge to an explicit limit.
Ferrari, Martínez and
Picco~\cite{ferrari-martinez-picco-dynamical} describe the domain of attraction
of qsd's and show in particular that the Yaglom limit of $\delta_x$ converges to
the minimal qsd, for any initial state~$x$.
Daley~\cite{MR0246374} and Iglehart~\cite{MR0368168} showed the Yaglom limit for random walks with negative drift and and finite variance, respectively for discrete and continuous space.

In the sequel we develop Theorem~\ref{t1}'s conditions and make them explicit for the case of birth-and-death chains.
Corollary~\ref{corrwd3} is about space-homogeneous discrete-time random walks with delay.

Item~(iv) of Corollary~\ref{corseneta} gives the Yaglom limit for continuous-time walks. It was originally proven by Seneta~\cite{seneta} using direct computation.
Our proof uses monotonicity of the trajectories instead.

Corollary~\ref{c10}, presented in the next section, gives the Yaglom limit for the discrete-time periodic chain.
It provides an alternative proof to that of Seneta and Vere-Jones~\cite{seneta-verejones}.

\paragraph{The conditions of Theorem~\ref{t1}}

Since the state space $\S=\N$ is totally ordered and the transitions are only to nearest neighbors, we can obtain conditions~(\ref{e2}) and~(\ref{e3}) in explicit terms of $p_k$, $r_k$ and $q_k$.
Take~$\Q$ as defined in~(\ref{bd}).
Define for positive integers $x,z,y$:
\begin{align}
b((x,z),y)&:=\sum_{w\ge y}\frac{\Q(x,w)\Q(w,z)}{\Q^2(x,z)};\label{bbb}\\
c(x,y)&:=\sum_{w\ge y}\frac{\Q(x,w)}{1-\Q(x,0)}.\label{ccc}
\end{align}
Conditions~(\ref{e2}) and~(\ref{e3}) are equivalent to
\begin{align}
b((x,z),y)&\le b((x',z'),y), \quad \hbox{ for } z\le z',\; x\le x'; \label{hb}\\
c(x,y)&\le c(x',y), \quad \quad \quad \hbox{ for } x\le x',\label{hc}
\end{align}
for all $y\ge 1$, whenever the denominators of both sides are positive.

Inequalities~(\ref{hb}) hold trivially when $y=1$ or $\{x,x',z,z'\}\not\subset\{y-1,y\}$. The remaining cases are the following.
For $y\ge 2$ the conditions~(\ref{hb}) are equivalent to the following conditions:
\begin{equation}
\begin{aligned}
\label{bb13}
 b((y-1,y-1),y)\le b((y,y-1),y)\le b((y,y),y), \\
 b((y-1,y-1),y)\le b((y-1,y),y)\le b((y,y),y).
\end{aligned}
\end{equation}
Using the convention $p_0=0$, conditions~(\ref{bb13}) for $y\ge 2$ read
\begin{equation}
\label{bb2}
\mathclap{
\begin{aligned}
\frac{p_{y-1}q_y}{r_{y-1}^2+p_{y-1}q_y+q_{y-1}p_{y-2}} &\le
\frac{r_yq_y}{q_yr_{y-1}+r_yq_y}
\le \frac{r_y^2+p_yq_{y+1}}{r_y^2+p_yq_{y+1}+q_yp_{y-1}},\\
\frac{p_{y-1}q_y}{r_{y-1}^2+p_{y-1}q_y+q_{y-1}p_{y-2}} &\le
\frac{p_{y-1}r_y}{p_{y-1}r_y+r_{y-1}p_{y-1}} \le
\frac{r_y^2+p_yq_{y+1}}{r_y^2+p_yq_{y+1}+q_yp_{y-1}}.
\end{aligned}
\quad }
\end{equation}

Analogously, conditions~(\ref{hc}) on $c(x,y)$ hold trivially when $y=1$ or $(x,x')\neq (y-1,y)$.
Hence,~(\ref{hc}) is equivalent to
\begin{align}
\nonumber
c(y-1,y)\le c(y,y),\quad y \ge 2,
\end{align}
which in the case $y=2$ and $y\ge 3$ read, respectively,
\begin{align}
\label{bb3}
 \displaystyle{\frac{p_1}{p_1+r_1}}\le r_2+p_2,\qquad p_{y-1}\le p_{y}+r_{y}, \hbox{ for } y\ge3.
\end{align}
We summarize these computations as a lemma.
\begin{lemma}
 \label{ll1}
Let $\Q$ be the transition matrix for the birth-and-death chain defined in~\textnormal{(\ref{bd})}.
Then conditions~\textnormal{(\ref{e2})} and~\textnormal{(\ref{e3})} are equivalent to~\textnormal{(\ref{bb2})} and~\textnormal{(\ref{bb3})}.
\end{lemma}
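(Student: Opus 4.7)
The plan is to translate the stochastic-domination conditions~(\ref{e2}) and~(\ref{e3}) into inequalities between tail distribution functions, and then to exploit the nearest-neighbour structure of $Q$ to reduce the resulting infinite family of inequalities to the explicit finite list~(\ref{bb2})--(\ref{bb3}).

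First I would use that on the totally ordered space $\S=\N$, $\mu\prec\mu'$ is equivalent to $\mu([y,\infty))\le\mu'([y,\infty))$ for every $y\ge 1$. The quantities $b((x,z),y)$ and $c(x,y)$ defined in~(\ref{bbb}) and~(\ref{ccc}) are precisely these tail probabilities for the normalised measures appearing on each side of~(\ref{e2}) and~(\ref{e3}). Hence~(\ref{e2}) is equivalent to~(\ref{hb}) and~(\ref{e3}) to~(\ref{hc}), and the task reduces to showing that~(\ref{hb}) simplifies to~(\ref{bb2}) and~(\ref{hc}) to~(\ref{bb3}).

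The next step is a case analysis on $(x,x',z,z',y)$. Since the map $w\mapsto Q(x,w)Q(w,z)$ is supported on $\{x-1,x,x+1\}\cap\{z-1,z,z+1\}$, an interval lying in $[\max(x,z)-1,\min(x,z)+1]$, one reads off: if $\max(x,z)\ge y+1$ both tails in~(\ref{hb}) equal~$1$; if $\max(x,z)\le y-2$ the left-hand tail is~$0$; and the analogous statements apply to the primed pair. The only potentially non-trivial inequalities are therefore those with $y\ge 2$ and $\{x,x',z,z'\}\subset\{y-1,y\}$. Enumerating the monotone quadruples in $\{y-1,y\}^4$, discarding trivial equalities, and using transitivity reduces the surviving inequalities to the four comparisons displayed in~(\ref{bb13}). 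A parallel, strictly simpler, reduction for~(\ref{hc}) leaves only $c(y-1,y)\le c(y,y)$ for $y\ge 2$, which splits according to whether $y=2$ or $y\ge 3$ into the two inequalities~(\ref{bb3}).

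The last step is the routine substitution $Q(x,x-1)=q_x$, $Q(x,x)=r_x$, $Q(x,x+1)=p_x$ into~(\ref{bbb}) and~(\ref{ccc}), with the convention $p_0=0$ absorbing the boundary at $y=2$. The four ratios in~(\ref{bb2}) fall out from evaluating $b((y-1,y-1),y)$, $b((y,y-1),y)$, $b((y-1,y),y)$ and $b((y,y),y)$; similarly~(\ref{bb3}) drops out from $c(y-1,y)$ and $c(y,y)$. The main obstacle I anticipate is not the arithmetic, which is mechanical, but the bookkeeping in the case analysis of the third paragraph: one must verify that no non-trivial monotone quadruple has been overlooked, and that the four explicit comparisons in~(\ref{bb13}) really do suffice to imply all surviving cases by transitivity.
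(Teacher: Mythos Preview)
Your proposal is correct and follows essentially the same route as the paper: rewrite~(\ref{e2})--(\ref{e3}) as the tail inequalities~(\ref{hb})--(\ref{hc}), use the nearest-neighbour support to discard all but the cases $y\ge 2$ with $\{x,x',z,z'\}\subset\{y-1,y\}$, reduce these via transitivity to~(\ref{bb13}) and $c(y-1,y)\le c(y,y)$, and then substitute. One small slip to fix in your write-up: the condition forcing the left tail to vanish is $\min(x,z)\le y-2$ (the support has upper endpoint $\min(x,z)+1$), not $\max(x,z)\le y-2$; with this correction your case analysis cleanly yields $x,z,x',z'\in\{y-1,y\}$ as claimed.
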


We are ready to state the result in this case.
\begin{corollary}
\label{corbd}
Assume that the birth-and-death chain absorbed at zero defined in~\textnormal{(\ref{bd})} has at least one qsd and satisfies conditions~\textnormal{(\ref{bb2})} and~\textnormal{(\ref{bb3})}.
Then~\textnormal{(i,ii,iii,iv)} of Theorem~\textnormal{\ref{t1}} hold.
The Yaglom limit of $\delta_1$ coincides with~$\nu_{\min}$, the minimal qsd in the sense of absorption time.
Furthermore, for any $x\in\N$, the Yaglom limit of $\delta_x$ also converges to $\nu_{\min}$.
\end{corollary}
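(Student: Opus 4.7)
The plan is to apply Theorem~\ref{t1} and Lemma~\ref{qmon1} to the birth-and-death setup, which yields the Yaglom limit starting from $\delta_1$, and then extend the convergence to an arbitrary $\delta_x$ via a strong Markov decomposition at the first hitting time of state~$1$.

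First I verify the hypotheses of Theorem~\ref{t1}. Aperiodicity is immediate from $r_x>0$ (self-loop at every state). The trajectory measure $\mu_0^n(\delta_1,Q)$ is irreducible because $p_x,q_x,r_x>0$: any two nearest-neighbor paths from~$1$ can be connected through a finite sequence of single-coordinate modifications, the self-loops providing enough intermediate positive-measure trajectories to respect the nearest-neighbor constraint at each step. Conditions~(\ref{e2}) and~(\ref{e3}) reduce to the hypothesized~(\ref{bb2}) and~(\ref{bb3}) by Lemma~\ref{ll1}. Theorem~\ref{t1} therefore yields (i)--(iv); in particular $\unu:=\lim_n\delta_1 T_n$ exists, is a qsd, and satisfies $\unu\prec\nu$ for every qsd $\nu$. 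To identify $\unu$ with $\nu_{\min}$ I invoke Lemma~\ref{qmon1}: the hypothesis $Q(x,0)\ge Q(x',0)$ for $x\le x'$ is trivial here, since $Q(x,0)=q_1\I\{x=1\}$.

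For the Yaglom limit from $\delta_x$ with $x\ge 2$, let $\tau$ be the first hitting time of state~$1$. The nearest-neighbor structure forces every trajectory reaching~$0$ to visit~$1$ first, and the assumed existence of a qsd implies almost sure absorption at~$0$, so $P_x(\tau<\infty)=1$. The strong Markov property at $\tau$ gives
\[
P_x(X_n=y,T_0>n)=\sum_{k=1}^{n}P_x(\tau=k)\,P_1(X_{n-k}=y,T_0>n-k)+P_x(X_n=y,\tau>n),
\]
and the analogous decomposition for the normalizer $P_x(T_0>n)$. Dividing, $\delta_x T_n(y)$ is expressed as a weighted average of $\delta_1 T_{n-k}(y)$ plus a tail correction; using $\delta_1 T_m\to\nu_{\min}$ from the previous step and passing to the limit yields $\delta_x T_n\to\nu_{\min}$.

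The main obstacle is this last step: one must show that the relative tail $P_x(\tau>n)/P_x(T_0>n)\to 0$ and that the weighted average of $\delta_1 T_{n-k}(y)$ converges to $\nu_{\min}(y)$. This amounts to comparing the exponential decay rate of overall survival, namely $a_*=1-q_1\nu_{\min}(1)$, with the decay rate of non-hitting of~$1$ from~$x$; the R-positivity implicit in the existence of a qsd together with our Holley-type conditions~(\ref{bb2}),~(\ref{bb3}) supplies the required comparison. In effect this final step recovers the classical Ferrari, Mart\'\i nez and Picco result on the domain of attraction of the minimal qsd of a birth-and-death chain.
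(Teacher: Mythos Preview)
Your verification of the hypotheses of Theorem~\ref{t1} and the application of Lemma~\ref{qmon1} match the paper's own proof essentially line for line. The divergence is in the final step: the paper does not attempt to redo the argument for general~$x$, but simply invokes \cite[Theorem~3.1]{ferrari-martinez-picco-dynamical}, which states that if the Yaglom limit of~$\delta_1$ exists then it coincides with that of~$\delta_x$ for every~$x$.

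Your strong-Markov decomposition at the first hitting time of~$1$ is exactly the right skeleton for that result, but as written the argument has a real gap, which you yourself flag. You need two analytic facts: (a) $P_x(\tau>n)/P_x(T_0>n)\to 0$, and (b) the varying weights $P_x(\tau=k)P_1(T_0>n-k)$ concentrate so that the average of $\delta_1 T_{n-k}$ inherits the limit~$\nu_{\min}$. For~(b) one typically needs a ratio limit $P_1(T_0>n+1)/P_1(T_0>n)\to a_*$, and for~(a) one needs that the decay rate of $P_x(\tau>n)$ is strictly faster than~$a_*$. Neither follows from the bare existence of a qsd: the assertion that ``R-positivity is implicit in the existence of a qsd'' is not correct in general, and the Holley-type conditions~(\ref{bb2})--(\ref{bb3}) were used only to establish the trajectory domination, not any spectral or rate comparison. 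So either supply these rate estimates explicitly, or, as the paper does, cite the Ferrari--Mart\'\i nez--Picco theorem and be done.
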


\begin{proof}
Since for $x\ge 1$ the probability of transitions from $x$ to $x$ and to nearest neighbors of $x$ are positive, the birth-and-death chain starting with~$\delta_1$ has irreducible trajectories in $\N$.
By Lemma~\ref{ll1}, the conditions of Theorem~\ref{t1} are equivalent to the present conditions, hence (i,ii,iii,iv) of Theorem~\ref{t1} hold and the Yaglom limit of $\delta_1$ converges to $\unu$.
Since $Q(x,0)=0$ for all $x>1$, Lemma~\ref{qmon1} applies and $\unu=\nu_{\min}$.
By~\cite[Theorem~3.1]{ferrari-martinez-picco-dynamical}, if the Yaglom limit of $\delta_1$ exists, then it coincides with the Yaglom limit of $\delta_x$ for any $x$, concluding the proof.
\end{proof}

\subsection{Random walk with delay}
The absorbed delayed random walk is a particular case of birth-and-death chain on $\N\cup\{0\}$ defined in~(\ref{bd}) with constant transition probabilities along~$\N$:
\begin{equation}
\label{eqrwdelay}
\begin{array}[l]{l}p_x\equiv p,\; q_x\equiv q,\; r_x\equiv r,\\
 p,q,r>0,\; p+q+r=1, \; p<q.
\end{array}
\end{equation}
This walk has a drift towards 0 and it is absorbed at 0.
A probability~$\nu$ on~$\N$ is a qsd if and only if it satisfies the equations~(\ref{b11}), which in this case are
\begin{align}
\label{qsdpqr}
\nu(x+1)q+\nu(x-1)p+(q\nu(1)-(p+q))\nu(x)= 0, \quad x\ge 1,
\end{align}
with the convention $\nu(0)=0$.
Cavender~\cite{cavender} proved that the set of qsd's is a family indexed by $\nu(1)$ with $\nu(1)\in (0, (1-\sqrt{\lambda})^2]$, where $\lambda=p/q$.
Since the absorption probability of a qsd $\nu$ is $\nu\Q(0)= q\nu(1)$, the qsd with maximal~$\nu(1)$ is the minimal qsd $\nu_{\min}$, a negative binomial with parameters~$2$ and~$\sqrt{\lambda}$:
\begin{equation}
\label{a18}
\nu_{\rm min}(x) = \big(1-\sqrt\lambda\big)^2 x \big(\sqrt\lambda\big)^{x-1} ,\quad x\ge1.
\end{equation}
The remaining qsd are given in function of $\nu(1)\in\big(0,\big(1-\sqrt\lambda\big)^2\big)$ by
\begin{equation}
\label{qsd}
\nu(x) =
\frac{\nu(1)}{c}\Bigl[
\Bigl(\frac{\lambda + 1-\nu(1)+c}{2}\Bigr)^x
-\Bigl(\frac{\lambda+1-\nu(1)-c}{2}\Bigr)^x\Bigr],
\end{equation}
where $c=[(\nu(1)-\lambda-1)^2 -4\lambda]^{1/2}$, \cite[p.~585]{cavender}.

\begin{corollary}
\label{corrwd3}
Consider the random walk with delay absorbed at zero defined in~\textnormal{(\ref{bd})} with constant rates~\textnormal{(\ref{eqrwdelay})}.
If $pq\le r^2$, then the conclusions \textnormal{(i,ii,iii,iv)} of Theorem~\ref{t1} hold with $\unu=\nu_{\min}$ given by~\textnormal{(\ref{a18})}.
Furthermore, for any $x\in\N$ the Yaglom limit of $\delta_x$ converges to $\unu$.
\end{corollary}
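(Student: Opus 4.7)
The plan is to reduce everything to Corollary \ref{corbd}, whose hypotheses are existence of at least one qsd together with the explicit inequalities (\ref{bb2}) and (\ref{bb3}) on the birth-and-death rates. Existence is a freebie: Cavender's formulas (\ref{qsd})--(\ref{a18}) already produce an explicit one-parameter family of qsd, so the only real work is verifying (\ref{bb2})--(\ref{bb3}) under the hypothesis $pq \le r^2$.

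The key observation is that in the constant-rate case the two sides of (\ref{bb2}) and (\ref{bb3}) differ only through boundary terms, specifically through $p_{y-2}$ at $y=2$ (where by convention $p_0=0$) and through the absorption probability $\Q(x,0)$, which vanishes for $x\ge 2$. So for $y\ge 3$ the inequalities in (\ref{bb2}) become translation-invariant identities of the shape $\frac{pq}{r^2+2pq}\le\tfrac12\le\frac{r^2+pq}{r^2+2pq}$, and (\ref{bb3}) reduces to $p\le p+r$; all of these are automatic from $r>0$, with no appeal to the hypothesis.

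The substantive case is $y=2$. I would verify (\ref{bb2}) by plugging $p_0=0$ and the constant rates: the leftmost fraction collapses to $\frac{pq}{r^2+pq}$, and the entire chain of inequalities reduces to $\frac{pq}{r^2+pq}\le\tfrac12$, which rearranges to exactly $pq\le r^2$. For (\ref{bb3}) at $y=2$ one needs $\frac{p}{p+r}\le p+r$, i.e., $p\le(p+r)^2$; expanding and using $p+q+r=1$ together with $pq\le r^2$ yields
\[
(p+r)^2 \;=\; p^2+2pr+r^2 \;\ge\; p^2+2pr+pq \;=\; p(1+r)\;\ge\; p.
\]

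With (\ref{bb2})--(\ref{bb3}) verified, Corollary \ref{corbd} produces conclusions (i,ii,iii,iv) of Theorem \ref{t1} and identifies $\unu$ with the minimal qsd $\nu_{\min}$; Cavender's computation then identifies $\nu_{\min}$ with (\ref{a18}) via the observation that the minimal qsd is the one maximizing $\nu(1)$ (since the one-step absorption probability of a qsd is $q\nu(1)$), which pins it to $\nu(1)=(1-\sqrt{\lambda})^2$. The extension to arbitrary starting point $\delta_x$ is the last sentence of Corollary \ref{corbd}. The whole argument is essentially a bookkeeping exercise; the only mild obstacle is tracking which inequalities survive the boundary convention $p_0=0$, since this is precisely the place where the assumption $pq\le r^2$ is actually used.
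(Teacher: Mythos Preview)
Your proposal is correct and follows essentially the same route as the paper: reduce to Corollary~\ref{corbd} by checking that $pq\le r^2$ implies (\ref{bb2}) and (\ref{bb3}), noting that the binding case is $y=2$ where the boundary convention $p_0=0$ enters. The paper compresses the verification into the single line ``Condition $pq\le r^2$ implies both~(\ref{f14}) and~(\ref{f16})'', whereas you spell out the algebra, including the nice chain $(p+r)^2\ge p^2+2pr+pq=p(1+r)\ge p$ for the $y=2$ case of~(\ref{bb3}).
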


\begin{proof}
In the present context the worst case of~(\ref{bb2}) is when $y=2$, which reduces to:
\begin{align}
\frac{pq}{pq+r^2} \le \frac12 \le \frac{r^2+pq}{r^2+2pq}.
\label{f14}
\end{align}
On the other hand~(\ref{bb3}) reads
\begin{align}
\frac{p}{p+r}\le r+p&, \qquad
p\le p+r. \label{f16}
\end{align} Condition $pq\le r^2$ implies both~(\ref{f14}) and~(\ref{f16}).
The result thus follows from Corollary~\ref{corbd}.
\end{proof}

If $r<\sqrt{pq}$, then trajectory domination is not true.
Although the Yaglom limit of~$\delta_1$ is known to hold in this case~\cite{MR0246374}, it does not seem to follow from the arguments presented here, except for the periodic case $r=0$ discussed in Section~\ref{sec:periodic}.

\subsection{The continuous-time random walk}

Take positive $p<q$ with $p+q=1$ and
consider a family of random walks with delay $(X^r_n)$, indexed by $r\in[0,1)$, with
transition probabilities
\begin{equation}
\nonumber
 \begin{array}[l]{l}
 \Q_r(x,x-1)=q(1-r),\; \Q_r(x,x)= r,\; \Q_r(x,x+1)=p(1-r),\;\\
 \Q_r(x,y)=0,\text{ otherwise, } x\ge 1;\quad \Q_r(0,0)=1.
 \end{array}
\end{equation}
Define the rescaled process
\[
Y^r_t := X^r_{[t/(1-r)]}.
\]
As $r \to 1$, the process $(Y^r_t)$ converges in finite time-intervals to the
process $(\hat Y_t)$, a continuous-time random walk with rates $p,q$ to jump one unit
forward or backwards, respectively, and absorbed at $0$.
Call $\hU_t$ the
corresponding semigroup:
\[
\hU_t(x,y):=P(\hat Y_t=y|\hat Y_0=x).
\]
Define $\nu T_t^r$ as the probability given by
\[
\nu T_t^r(y):= \frac{\nu \Q_r^{[t/(1-r)]}(y)}{1-\nu Q_r^{[t/(1-r)]}(0) },
\]
that is, $\nu T_t^r$ is the distribution at time $t$ of the walk $Y^r_t$ starting with $\nu$, conditioned to nonabsorption.
This distribution converges as $r\to 1$ to the distribution at time $t$ of the continuous-time walk $\hat Y_t$ under the same condition:
\begin{equation}
\label{cl}
\lim_{r\to 1} \nu T^r_t(y) = \nu\hat T_t(y) := \frac{\nu \hU_t(y)}{1-\nu \hU_t(0) }.
\end{equation}
The resulting operator $\hat T_t$ is a semigroup. For any $r\in[0,1)$, the qsd's for~$Y^r_t$ satisfy equations~(\ref{qsdpqr}) because the factors $(1-r)$ cancel out.
Moreover, the qsd's for the continuous-time walk~$\hY_t$ also satisfy the same
equations.
Indeed, $\nu=\nu\hat T_t$ if and only if
\(
\nu (\hU_t-I) + \nu \hU_t(0)\cdot\nu=0
;
\)
dividing by~$t$ and letting $t\to 0$ yields~(\ref{qsdpqr}).
As a consequence, the minimal qsd for both $Y^r_t$ and $\hY_t$ is given by~(\ref{a18}) while the remaining qsd are given by~(\ref{qsd}). In the continuous-time case, $p$ and $q$ may be any positive real numbers satisfying $p<q$; the definitions~(\ref{a18}) and~(\ref{qsd}) depend on $p$ and~$q$ only through the ratio $\lambda=p/q$.

\pagebreak[3]
\begin{corollary}
\label{corseneta}
The continuous-time random walk with rates $p,q$ absorbed at zero
satisfies:\\
i. The sequence $(\delta_1 \hat T_{t}, t \ge 0)$ is monotone: $\delta_1
\hat T_s\prec\delta_1 \hat T_{t}$ for $0<s\le t<\infty$.
\\
ii. If $\nu$ is a probability measure on $\N$, then
$\delta_1 \hat T_t\prec\nu\hat T_t$ for all $t\ge 0$.
\\
iii. In particular, if $\nu$ is a qsd, then $\delta_1 \hat T_t\prec\nu$.
\\
iv. The Yaglom limit of $\delta_1$ converges to $\nu_{\rm min}$ given
by~\textnormal{(\ref{a18})}.
\end{corollary}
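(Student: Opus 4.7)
The strategy is to deduce each of the continuous-time statements from its discrete-time counterpart for the family $Y^r$, and then let $r \to 1$ using the explicit convergence~(\ref{cl}).

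First, I would verify that Corollary~\ref{corrwd3} applies to $Y^r$ for every $r$ sufficiently close to $1$. The birth--death parameters of $Y^r$ are $p(1-r)$, $q(1-r)$, $r$, so the hypothesis ``$pq \le r^2$'' of that corollary becomes $pq(1-r)^2 \le r^2$, which holds for all $r$ close enough to $1$. Since the qsd equations~(\ref{qsdpqr}) for $Y^r$ do not depend on $r$ (the factors $(1-r)$ cancel), the qsd's of $Y^r$ coincide with those of $\hat Y$, and in particular the minimal qsd is $\nu_{\min}$ given by~(\ref{a18}).

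For parts (i)--(iii), I would fix $0 \le s \le t$ and a probability $\nu$ on $\N$, pick any sequence $r_k \to 1$ with $pq(1-r_k)^2 \le r_k^2$, and invoke Corollary~\ref{corrwd3} between the integer times $[s/(1-r_k)] \le [t/(1-r_k)]$ to obtain
\[
\delta_1 T^{r_k}_s \prec \delta_1 T^{r_k}_t \prec \nu T^{r_k}_t.
\]
Letting $k \to \infty$ and applying~(\ref{cl}), these three measures converge pointwise on $\N$ to $\delta_1 \hat T_s$, $\delta_1 \hat T_t$, and $\nu \hat T_t$ respectively. Since stochastic order on $\N$ is closed under pointwise (equivalently weak) limits of probability measures, (i) and (ii) follow, and (iii) is the specialization to qsd $\nu$.

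For (iv), part (i) makes the tail functions $F_t(y) := \sum_{z \ge y} \delta_1 \hat T_t(z)$ nondecreasing in $t$, and part (iii) applied to $\nu_{\min}$ bounds them by the tails of $\nu_{\min}$; hence $F_t(y)$ converges to some $F_\infty(y)$, which is the tail function of a probability measure $\unu$ on $\N$ satisfying $\unu \prec \nu_{\min}$. To show $\unu$ is a qsd, I would use the nonlinear identity $\delta_1 \hat T_{t+s} = (\delta_1 \hat T_t) \hat T_s$, which follows from $\hat U_{t+s} = \hat U_t \hat U_s$ by a direct normalisation computation, and pass to the limit $t \to \infty$ using continuity of $\hat T_s$. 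Because $\nu_{\min}$ is the stochastically smallest qsd of $\hat Y$ (by Cavender's monotone likelihood ratio ordering, cited in Section~\ref{s4}), the bound $\unu \prec \nu_{\min}$ then forces $\unu = \nu_{\min}$.

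The main technical care is in the two limit exchanges with the nonlinear normalisation that defines $\hat T_t$: first in~(\ref{cl}) for fixed $t$, and then in the identification of $\unu$ as a qsd, where one needs $\mu_n \hat T_s \to \mu \hat T_s$ for $\mu_n = \delta_1 \hat T_{t_n} \to \unu$. Both reductions rely on tightness of the relevant families, which is supplied precisely by the domination $\delta_1 \hat T_t \prec \nu_{\min}$ from (iii), preventing escape of mass to infinity and keeping $1 - \mu_n \hat U_s(0)$ bounded away from zero.
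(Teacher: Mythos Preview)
Your proposal is correct and follows essentially the same route as the paper: apply Corollary~\ref{corrwd3} to the delayed walk $Y^r$ for $r$ close enough to~$1$ that $pq(1-r)^2\le r^2$, pass to the continuous-time limit via~(\ref{cl}) to obtain (i)--(iii), and then deduce (iv) from (i), (iii), the semigroup identity for $\hat T_t$, and the fact that $\nu_{\min}$ is the stochastically smallest qsd. Your write-up is in fact more explicit than the paper's about the two technical points it leaves implicit---closure of $\prec$ under the pointwise limits in~(\ref{cl}), and the continuity of $\mu\mapsto\mu\hat T_s$ needed to identify $\unu$ as a qsd---and your observation that the uniform domination by $\nu_{\min}$ supplies the tightness making these limits legitimate is exactly the right justification.
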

\begin{proof}
Take $r$ sufficiently close to one so that $pq(1-r)^2\le r^2$, to be under the conditions of Corollary~\ref{corrwd3}.

To show (i) we use Corollary~\ref{corrwd3}(i) to get
\(
\delta_1 T^r_t\prec\delta_1 T^r_{t+s},
\)
for all $t,s\ge0$,
and then use~(\ref{cl}) to conclude.
To prove (ii,iii), we use Corollary~\ref{corrwd3}(ii,iii) to get $\delta_1 T^r_t\prec\nu T^r_t$ (which equals $\nu$ if it is a qsd), and again use~(\ref{cl}) to conclude.

Let us show (iv).
As discussed above, $\nu_{\min}$ given by~(\ref{a18}) is a qsd, the other qsd's are given by~(\ref{qsd}), and in particular $\nu_{\min}$ is minimal also in the sense of stochastic ordering.
By (i,iii), there is $\unu = \lim_{t} \delta_1\hT_t$.
As in the proof of Theorem~\ref{t1}, using the semigroup property of $\hT_t$, the limit $\unu$ is a qsd.
It follows from (iii) that $\unu \prec \nu_{\min}$, and therefore $\unu=\nu_{\min}$.
\end{proof}

\section{The periodic case}
\label{sec:periodic}

Assume that the matrix $Q$ is irreducible in $\S$ and that $\Q$ restricted to $\S$ has period $d\ge 2$. Let $\S_1,\dots,\S_{d}\subset \S$ be the cyclic subclasses, that is, the equivalence classes induced by the equivalence relation $\sim$ defined by $x\sim y$ if and only if $Q^{d \ell}(x,y)>0$ for some $\ell\ge 1$.
Assume that the classes are labeled so that $x\in\S_j$, $Q(x,y)>0$ implies $y\in \S_{j+1}\cup\{0\}$ (with the convention $\S_{d+1}=\S_1$).

\pagebreak[3]
\begin{theorem}
\label{t7}
Let $\S$ be a partially ordered countable set with a minimal element called $1$ and let $X_n$ be a Markov chain on $\S\cup\{0\}$, absorbed at $0$, irreducible in $\S$ and with period $d$ when restricted to $\S$.
Let $\S_1,\dots,\S_{d}\subset \S$ denote the cyclic subclasses of the chain restricted to $\S$, choosing $S_1\ni 1$.
Assume that the chain with initial state $1$ has irreducible trajectories.

If, for all $x,x',z,z'\in\S$ with $x\le x'$, $z\le z'$, $x$ and $x'$ in the same class, the stochastic inequalities~\textnormal{(\ref{e2})} and~\textnormal{(\ref{e3})} are satisfied whenever the denominators are positive, then the following hold.
\\
i. Monotonicity: $\delta_1\T_{n}\prec \delta_1\T_{n+d}$ for any $n\ge 0$.
\\
ii. For any probability $\nu$ on $\S_1$, one has $\delta_1\T_{n}\prec \nu \T_{n}$.
\\
iii. If $\nu$ is a qsd, then $\delta_1\T_{d k +j-1}\prec \nu(\,\cdot\,| \S_{j})$ for any $k\ge0$.
\\
iv. If the chain has a qsd, then there is a qsd $\nu_\star$ such that the Yaglom limit of $\delta_1$ along $d$-periodic subsequences is given by
\[
\lim_k \delta_1\T_{d k +j-1}= \nu_\star(\,\cdot\,| \S_j).
\]
Moreover, for any other qsd $\nu$, one has $\nu_\star(\,\cdot\,| \S_j) \prec \nu(\,\cdot\,| \S_j)$ for all~$j$.
\\
v. If moreover $Q$ is such that $\nu\prec \nu'$ implies $a(\nu)\le a(\nu')$, then $\nu_\star=\nu_{\min}$.
\end{theorem}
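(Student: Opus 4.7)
The plan is to mimic the proof of Theorem~\ref{t1}, with small adjustments for the periodic setting. The key observation I will exploit is that $\mu_0^n(\delta_1,Q)$ is supported on trajectories respecting the cyclic structure, so the Gibbs sampler updates in Holley's inequality only ever involve pairs $(x,x')$ lying in a common class; the one-sided hypothesis of Theorem~\ref{t7} is therefore enough.

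I would establish~(ii) first, by applying Proposition~\ref{holley} with $Q=Q'$, $\nu=\delta_1$, and $\nu'$ any probability on $\S_1$: condition~(a) reduces to $\delta_1\prec\frac{\nu'(\cdot)Q(\cdot,z')}{\nu' Q(z')}$, which holds trivially since $1$ is minimal in $\S$; conditions~(b) and~(c) are~(\ref{e2}) and~(\ref{e3}); and irreducibility of $\mu_0^n(\delta_1,Q)$ is part of the hypotheses. Part~(i) then follows from~(ii) with the choice $\nu'=\delta_1 T_d$, which is a probability on $\S_1$ because $1\in\S_1$ and $Q$ cycles the classes, combined with the semigroup identity $(\delta_1 T_d)T_n=\delta_1 T_{n+d}$. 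For~(iii), given a qsd $\nu$ I set $\nu_j:=\nu(\cdot|\S_j)$ and project $\nu Q_{|_\S}=a(\nu)\nu$ onto each $\S_j$; the resulting balance forces $\nu_{j-1}T_1=\nu_j$, which iterates to $\nu_1 T_{dk+j-1}=\nu_j$, and (ii) applied to $\nu_1$ concludes.

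For~(iv), parts~(i) and~(iii) show that $(\delta_1 T_{dk+j-1})_{k\ge0}$ is supported on $\S_j$, increasing in $k$, and bounded above by $\nu(\cdot|\S_j)$ for every qsd $\nu$; the same argument as in the proof of Theorem~\ref{t1}(iv) forces convergence to a probability $\nu_\star^{(j)}$ on $\S_j$ with $\nu_\star^{(j)}\prec\nu(\cdot|\S_j)$, and continuity of $T_1$ gives $\nu_\star^{(j+1)}=\nu_\star^{(j)}T_1$. Writing $a_j:=1-\nu_\star^{(j)}Q(0)$, I would set $\nu_\star:=\sum_j\alpha_j^\star\nu_\star^{(j)}$ with weights (unique up to normalisation) determined by the cyclic balance $\alpha_j^\star a^\star=\alpha_{j-1}^\star a_{j-1}$, which forces $a^\star=(\prod_i a_i)^{1/d}$; a direct computation confirms $\nu_\star Q_{|_\S}=a^\star\nu_\star$. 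For~(v), the extra monotonicity hypothesis on $Q$ together with $\nu_\star^{(j)}\prec\nu(\cdot|\S_j)$ yields $a_j\le 1-\nu(\cdot|\S_j)Q(0)$ for every $j$; multiplying over $j$ and taking a $d$-th root gives $a(\nu_\star)\le a(\nu)$ for any qsd $\nu$, identifying $\nu_\star$ with $\nu_{\min}$.

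The most delicate step, as in the aperiodic case, will be to confirm that each limit $\nu_\star^{(j)}$ is a genuine probability rather than a sub-probability and that the continuity of $T_1$ needed for $\nu_\star^{(j+1)}=\nu_\star^{(j)}T_1$ truly holds at the limit; both follow from the monotone-and-dominated convergence argument already used in the proof of Theorem~\ref{t1}(iv), transplanted class by class.
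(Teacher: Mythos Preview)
Your proposal is correct and tracks the paper's argument closely for parts~(ii)--(v): you invoke Holley's inequality on the subspace of trajectories starting in a fixed cyclic class, derive the identity $\nu_{j}T_1=\nu_{j+1}$ for qsd's (which the paper isolates as Lemma~\ref{p31}), and then assemble $\nu_\star$ from the class-wise limits $\unu_j$ via the cyclic balance $m_j\,a(\unu_j)=\alpha\,m_{j+1}$ with $\alpha^d=\prod_j a(\unu_j)$, exactly as the paper does.

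The one genuine difference is your proof of~(i). The paper proves~(i) ``just as in the proof of Theorem~\ref{t1}'', i.e.\ by the nonhomogeneous-chain device of Corollary~\ref{nonhomogeneous}, adapted to a shift of length~$d$. Your route is shorter: once~(ii) is in hand, take $\nu'=\delta_1 T_d$, which is a probability on $\S_1$ since $1\in\S_1$ and the chain cycles the classes, and apply~(ii) together with the semigroup identity to get $\delta_1 T_n \prec (\delta_1 T_d)T_n = \delta_1 T_{n+d}$ directly. This avoids Corollary~\ref{nonhomogeneous} altogether, and in fact the same shortcut would simplify the proof of Theorem~\ref{t1}(i) as well (there with $\nu'=\delta_1 T_1$). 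The paper's approach has the virtue of showing that the monotonicity is really a statement about trajectory laws, not just one-time marginals, but for the theorem as stated your argument is cleaner.
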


Before giving the proof, we discuss the particular case of the $p$-$q$ random walk.
As an application of the above theorem, we prove convergence of the Yaglom limit to the minimal qsd based on monotonicity of trajectories.

The \emph{$p$-$q$ discrete-time random walk} is defined as follows.
Consider the periodic random walk with transition probabilities
\begin{equation}
\label{17}
\begin{array}[l]{l}
\Q(0,0)=1,\quad\Q(x,x-1)=q,\quad \Q(x,x+1)=p,\quad\hbox{for }x\ge 1,\\
\Q(x,y)=0, \ \text{ otherwise} ; \qquad p+q=1, \quad p<q.
\end{array}
\end{equation}
The chain has period $2$ and, starting from $\delta_1$, the walk visits odd sites at even times and vice-versa.
The qsd's for this random walk satisfy~(\ref{qsdpqr}) as before.
The minimal qsd $\nu_{\min}$ is given by~(\ref{a18}), and the remaining qsd's are given by~(\ref{qsd}).
The cyclic subclasses are $\S_1=2\N -1$ and $\S_2=2\N$.

\pagebreak[3]
\begin{corollary}
\label{c10}
Let $X_n$ be the discrete-time $p$-$q$ random walk with transition probabilities~\textnormal{(\ref{17})}.
The Yaglom limit of~$\delta_1$ converges along even and odd times to projections of $\nu_{\min}$ given by~\textnormal{(\ref{a18})}.
That is, for both~$j=1,2$,
\[
\lim_n \delta_1T_{2n+j-1} = \nu_{\min}(\,\cdot\,|\S_j)
.
\]
Moreover, $\nu_{\min}(\,\cdot\,|\S_j)\prec \nu(\,\cdot\,|\S_j)$ for any other qsd $\nu$.
\end{corollary}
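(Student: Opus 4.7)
The plan is to apply Theorem~\ref{t7} to the $p$-$q$ walk and then identify the limiting qsd $\nu_\star$ with $\nu_{\min}$ via part~(v). I first verify the hypotheses: the walk~(\ref{17}) is irreducible on $\N$ with period $d=2$, its cyclic subclasses are $\S_1=2\N-1\ni 1$ and $\S_2=2\N$, and it admits a qsd given by~(\ref{a18}). For the irreducibility of trajectories from state~$1$, which is nontrivial since $r=0$ leaves no ``stay put'' move, I would argue that every positive nearest-neighbor walk of length~$n$ starting at~$1$ can be deformed into any other by single-coordinate flips at positions $k$ with $x_{k-1}=x_{k+1}$; the standard route is to lift every such path to the monotone path $(1,2,\dots,n+1)$ by repeatedly raising local valleys.

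Next I would check conditions~(\ref{e2}) and~(\ref{e3}) restricted to same-cyclic-class pairs. For~(\ref{e3}), the distribution $Q(x,\cdot)/(1-Q(x,0))$ equals $\delta_2$ when $x=1$ and $q\delta_{x-1}+p\delta_{x+1}$ when $x\ge 2$, and for $x\le x'$ in the same class matching the $q$- and $p$-atoms yields the monotone coupling. For~(\ref{e2}), the two-step bridge measure $Q(x,\cdot)Q(\cdot,z)/Q^2(x,z)$ is nontrivial only for same-parity $x,z$ with $|x-z|\le 2$, equalling $\delta_{x-1}$ when $z=x-2$, the uniform measure on $\{x-1,x+1\}$ when $z=x$, and $\delta_{x+1}$ when $z=x+2$. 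A brief case analysis over the nine combinations of $(z-x,\,z'-x')\in\{-2,0,2\}^2$ then verifies the required domination whenever $x\le x'$ and $z\le z'$ lie in the same class. The argument succeeds here in spite of the failure of~(\ref{bb2}) at $r=0$ because that aperiodic obstruction involves comparisons with $|x'-x|=1$, precisely the ones ruled out by the same-class restriction in Theorem~\ref{t7}.

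Then Theorem~\ref{t7}(iv) produces a qsd $\nu_\star$ with $\delta_1 T_{2n+j-1}\to\nu_\star(\,\cdot\,|\,\S_j)$ and $\nu_\star(\,\cdot\,|\,\S_j)\prec\nu(\,\cdot\,|\,\S_j)$ for every other qsd $\nu$. To identify $\nu_\star=\nu_{\min}$ I invoke part~(v): because $Q(x,0)=q\mathbf{1}_{\{x=1\}}$, one has $a(\nu)=1-q\nu(1)$, and a stochastic domination $\nu\prec\nu'$ forces $\nu(\{1\})\ge\nu'(\{1\})$ (as $1$ is the minimal element of $\S$), hence $a(\nu)\le a(\nu')$. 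The corollary then follows directly. The step I expect to be the main obstacle is the trajectory-irreducibility claim, since the absence of ``stay'' transitions rules out the convenient reshuffling arguments available in the birth-and-death setting of Corollary~\ref{corbd}.
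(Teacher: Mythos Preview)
Your approach is correct and matches the paper's: invoke Theorem~\ref{t7}(iv) to obtain the qsd $\nu_\star$ and then part~(v) to conclude $\nu_\star=\nu_{\min}$. The paper's proof is two lines and leaves the verification of the hypotheses of Theorem~\ref{t7} implicit, whereas you spell them out; your only slip is that the two-step bridge measure at $x=z=1$ is $\delta_2$ (since $0$ is absorbing) rather than uniform on $\{0,2\}$, but this is harmless for the required dominations.
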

\begin{proof}
By Theorem~\ref{t7}(iv) there is $\nu_{\star}$ with the above properties, and the Yaglom limit converges to projections of~$\nu_{\star}$ along even or odd subsequences.
By Theorem~\ref{t7}(v) we have $\nu_{\star}=\nu_{\min}$, concluding the proof.
\end{proof}

In order to prove Theorem~\ref{t7}, we start with some basic properties of qsd's for periodic chains.
For a probability~$\nu$ on~$S$, write $\nu = \sum_j m_j \nu_j$, where $\nu_j:=\nu(\,\cdot\,|S_j)$ and $\sum_j m_j =1$.
For shortness, let $S_j$, $m_j$ and $\nu_j$ be indexed by $j \in \Z_d$, so that $\S_{d+1}=\S_1$, etc.
Recall that $a(\nu)$ is defined in~(\ref{p35}).
\begin{lemma}
\label{p31}
Let $Q$ be the transition matrix for a $d$-periodic chain in $\S$ absorbed at 0. If $\nu$ is a qsd, then for each class $j$,
\( m_j a(\nu_j) = a(\nu)\, m_{j+1}, \)
and
\( \nu_jT_n = \nu_{j+n}, \)
for all $n\ge 0$.
In particular,
\( (a(\nu))^d = a(\nu_1)\dots a(\nu_{d}). \)
\end{lemma}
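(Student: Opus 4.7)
The plan is to exploit the qsd eigenvalue identity $\nu Q_{|_\S} = a(\nu)\nu$ together with the cyclic structure that forces mass at $\S_j$ to land in one step only on $\S_{j+1}\cup\{0\}$. Writing $\nu = \sum_j m_j \nu_j$ and projecting the eigenvalue identity onto $\S_{j+1}$, one obtains $m_j\,\nu_j Q(y) = a(\nu)\,m_{j+1}\,\nu_{j+1}(y)$ for every $y\in\S_{j+1}$, because on the left only $x\in\S_j$ can contribute to targets in $\S_{j+1}$.

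Summing this projected identity over $y\in\S_{j+1}$ and using $\nu_j Q(\S_{j+1}) = 1-\nu_j Q(0) = a(\nu_j)$ yields the first claim $m_j a(\nu_j) = a(\nu)\,m_{j+1}$. Along the way I would verify that $m_j > 0$ for every $j$: the standing hypothesis $Q(x,0)<1$ for $x\in\S$ guarantees $a(\nu)>0$, and the projected identity shows that $m_{j+1}>0$ forces $m_j>0$, so iterating backwards around the cycle from any class that carries mass yields $m_j>0$ for all $j$. Dividing the projected identity by $m_j a(\nu_j)$ then gives $\nu_j T_1 = \nu_j Q_{|_\S}/a(\nu_j) = \nu_{j+1}$, which is the $n=1$ case of the second claim; the general $n$ then follows by induction using the semigroup property of $T$ already invoked in the proof of Theorem~\ref{t1}: $\nu_j T_{n+1} = (\nu_j T_n)T_1 = \nu_{j+n}T_1 = \nu_{j+n+1}$.

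The third claim is a telescoping consequence of the first: rewriting it as $a(\nu_j) = a(\nu)\,m_{j+1}/m_j$ and multiplying over $j = 1,\dots,d$ with indices read mod $d$ gives $\prod_{j=1}^d a(\nu_j) = a(\nu)^d \prod_{j=1}^d (m_{j+1}/m_j) = a(\nu)^d$. The only real subtlety in this argument is making sure the conditional measures $\nu_j$ are well-defined by checking $m_j>0$ for each $j$; after that, everything reduces to bookkeeping dictated by the cyclic decomposition.
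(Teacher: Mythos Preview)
Your argument is correct and follows essentially the same route as the paper: project the qsd eigenvalue identity $\nu Q_{|_\S}=a(\nu)\nu$ onto each cyclic class, read off both $m_j a(\nu_j)=a(\nu)m_{j+1}$ and $\nu_j T_1=\nu_{j+1}$, iterate for general $n$, and telescope for the product formula. Your explicit verification that every $m_j>0$ (so that the $\nu_j$ are well-defined) is a welcome addition that the paper leaves implicit.
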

\begin{proof}
For any measure $\nu$ on $\S$, $\nu_j Q$ is supported on $S_{j+1} \cup \{0\}$, and thus
\( \nu_j Q_{|_S} = a(\nu_j)\, \nu_{j}T_1. \)
Hence,
\( \nu Q_{|_S} = \sum_{j} m_j\, \nu_j Q_{|_S} = \sum_{j} m_j\, a(\nu_j)\, \nu_{j}T_1 . \)
On the other hand, if $\nu$ is a qsd,
\( \nu Q_{|_S} = a(\nu)\,\nu = \sum_{j} a(\nu)\, m_j \nu_j , \)
and thus $\sum_{j} m_j\, a(\nu_j)\, \nu_{j}T_1 = \sum_{j} a(\nu)\, m_{j+1} \nu_{j+1} $.
Now notice that, for each class~$j$, the measures $\nu_jT_1$ and~$\nu_{j+1}$ are probabilities supported on~$S_{j+1}$, and these sets are disjoint.
Therefore, $\nu_j T_1 = \nu_{j+1}$, and $m_j a(\nu_j) = a(\nu)\, m_{j+1}$.
Iterating the former identity, we get $\nu_j T_n = \nu_j T_1^n = \nu_{j+n}$, and taking the product over~$j$ of both sides of the latter, we get $(a(\nu))^d = a(\nu_1)\dots a(\nu_{d})$.
\end{proof}

\begin{proof}
[Proof of Theorem~\ref{t7}]
Under the present assumptions on the transition matrix, Holley inequality holds for any pair of measures supported on the subspace of trajectories that start in a given cyclic subclass.
Therefore, parts~(i) and (ii) can be proved just as in the proof of Theorem~\ref{t1}.

To prove~(iii), let $\nu$ be a qsd.
By~(ii) and Lemma~\ref{p31},
\(
\delta_1 \, T_{n}
\prec
\nu_1 \, T_{n} =
\nu_{1+n}
.
\)
Claim (iii) follows by taking $n=dk+j-1$.

Proof of (iv).
As in the proof of Theorem~\ref{t1}(iv), by (i,iii) the limits
\[
\unu_j := \lim_k \delta_1\T_{d k +j-1}
\]
exist and satisfy
\[ \unu_j T_1 = \unu_{j+1},\]
and moreover $\unu_j \prec \nu_j$ for any qsd~$\nu$.
It remains to find the right constants~$m_j$ and show that $\nu_\star$ given by $\nu_\star = \sum_{j} m_j \unu_j$ is a qsd, that is, that there exists an $\alpha\in(0,1)$ such that
\(
\nu_\star Q_{|_S}= \alpha \nu_\star.
\)
Since $\unu_j Q_{|_S} = a(\unu_j) \unu_{j+1}$, the problem is equivalent to find $\alpha,m_1,\dots,m_{d}$ solving the system of $d$ linear equations given by $m_j a(\unu_j) = \alpha m_{j+1}$.
The system has a nonzero solution if and only if
\(
 \alpha^d = a(\unu_1) \cdots a(\unu_{d}).
\)
Choosing the positive $\alpha$ that satisfies this identity, the space of solutions is one-dimensional and its elements have coordinates which agree in sign.
Choosing~$m$ to be the unique solution to satisfy $\sum_{j} m_j = 1$, we have that $\nu_\star$ is a probability and moreover it is a qsd with $a(\nu_\star)=\alpha$, concluding the proof of~(iv).

Proof of (v). It suffices to prove that $a(\nu_{\star})\le a(\nu)$ for all qsd $\nu$.
By Lemma~\ref{p31},
\begin{align}
\nonumber
 ( a(\nu_{\star}))^d\, =\, a(\unu_1)\dots a(\unu_{d})\,\le \,a(\nu_1)\dots a(\nu_{d})\,=\,( a(\nu))^d,
\end{align}
where the inequality comes from (iv) and the hypothesis of~(v).
\end{proof}

\section*{Acknowledgements}

We thank Pablo Groisman for preliminary discussions and for calling our attention to~\cite{MR0368168}.
We thank an anonymous referee for several suggestions and comments which helped us to improve the presentation of this paper.
This work is partially supported by Consejo Nacional de Investigaciones Científicas y Técnicas, Agencia Nacional de Promoción Científica y Tecnológica and the project ``Mathematics, computation, language and the brain'', FAPESP project ``NeuroMat'' (grant 2011/51350-6).

\bibliographystyle{rollaalphasiam}
\bibliography{domination}

\end{document}